\newcommand*{\sheafhom}{\mathcal{H}\kern -.5pt om}
\numberwithin{equation}{section} % Number equations within sections (i.e. 1.1, 1.2, 2.1, 2.2 instead of 1, 2, 3, 4)
\numberwithin{figure}{section} % Number figures within sections (i.e. 1.1, 1.2, 2.1, 2.2 instead of 1, 2, 3, 4)
\numberwithin{table}{section} % Number tables within sections (i.e. 1.1, 1.2, 2.1, 2.2 instead of 1, 2, 3, 4)
\newtheorem{thm}{Theorem}[section]
\newtheorem{cor}[thm]{Corollary}
\newtheorem{prop}[thm]{Proposition}
\theoremstyle{definition}
\newtheorem{defn}[thm]{Definition}
\theoremstyle{remark}
\newtheorem{rem}[thm]{Remark}
\DeclareMathOperator{\St}{st}
\DeclareMathOperator{\lk}{lk}
\DeclareMathOperator{\Ast}{ast}
\newcommand{\horrule}[1]{\rule{\linewidth}{#1}} % Create horizontal rule command with 1 argument of height
\title{	
	\normalfont \normalsize 
	\textsc{} \\ [25pt] % Your university, school and/or department name(s)
	\horrule{0.5pt} \\[0.4cm] % Thin top horizontal rule
	\huge Recursive properties of Cohen--Macaulay flag simplicial complexes and Lefschetz decompositions from $f$-vectors
	
	% Old title below... 
	% ^ Local global structures and flag shellable spheres

	\horrule{2pt} \\[0.5cm] % Thick bottom horizontal rule
}
  \author{Soohyun Park \\ \href{mailto:soohyun.park@mail.huji.ac.il}{soohyun.park@mail.huji.ac.il} } % Your name
\date{\normalsize October 23, 2024} % Today's date or a custom date
\begin{document}
	
	\maketitle

	\begin{abstract}
		\noindent Most applications of the hard Lefschetz theorem related to combinatorial properties of simplicial complexes involve their $h$-vectors. In the context of positivity properties involving $h$-vectors of flag spheres, $f$-vectors with a Lefschetz-type ``Boolean'' decomposition have been studied. In this note, we explore families of flag simplicial complexes where we can see this Boolean decomposition explicitly in terms of transformations connecting different simplicial complexes in this family. Note that we will take complexes in a given dimension to be PL homeomorphic to each other. In particular, the existence of a Boolean decomposition patched from local parts can be phrased in terms of a certain map formally satisfying an analogue of the hard Lefschetz theorem. The map is given by the composition of a double suspension with a ``net single edge subdivision''. Here, the former contributes to the Boolean part and the latter contributes to the disjoint non-Boolean part. The fact that the simplicial complex with the given $f$-vector can be taken to be balanced suggests algebraic versions of maps connected to these decompositions.
	\end{abstract}

	\section*{Introduction}

	The Hard Lefschetz theorem has been a source of many interesting combinatorial results related to $h$-vectors of simplicial polytopes (e.g. p. 77 -- 78 of \cite{St}). In general, one usually works with $h$-vectors of simplicial complexes while making use of geometric results. However, there are also $f$-vectors of simplicial complexes which seem to exhibit this property which are studied in the context of $h$-vectors of flag spheres (e.g. \cite{NP}, \cite{NPT}, \cite{Pbalfveclef}). To be more specific, this involves the following decomposition: \\
	
	\begin{defn} (Proposition 6.2 on p. 1377 of \cite{NPT}) \\ 
		A \textbf{Boolean decomposition} of a $(d - 1)$-dimensional simplicial complex $\Gamma$ is a decomposition \[ \Gamma = \{ F \cup G : F \in S, G \in 2^{[d - 2|F|]} \} \] where $S \le \Gamma$ is the subcomplex consisting of faces of $\Gamma$ disjoint from $[d]$. We will refer to $F \in S$ as the initial part and $G \in 2^{[d - 2|F|]}$ as the Boolean part. \\
	\end{defn}
	
	Moreover, the degrees \emph{not} being halved indicates a structure that is more similar to a Lefschetz decomposition from a geometric source. \\
	
	In this note, we would like to further explore other examples where $f$-vectors equal to $h$-vectors of the given simplicial complexes have such a structure. To this end, we start with recursive structures in a more general class of flag simplicial complexes where those of a given dimension are PL homeomorphic to each other. In addition, we also consider how the Boolean decomposition appears in a more constructive way. For example, we use results older than those used to show the existence of simplicial complexes whose $f$-vectors are equal to $h$-vectors of given flag simplicial complexes (Corollary 2.3 on p. 474 of \cite{CCV}, Theorem on p. 23 of \cite{BFS}) to carry out explicit constructions. This includes earlier work in \cite{CV}. In the course of doing this, we end up studying local-global behavior of the decomposition. When the starting simplicial complex is a pseudomanifold, this amounts to comparisons with Boolean decompositions of codimension 2 flag spheres. This is summarized below: \\
	
	\begin{thm}
		Let $\Delta$ be a flag Cohen--Macaulay simplicial complex and $\Gamma$ be a simplicial complex such that $h(\Delta) = f(\Gamma)$. Define $\Gamma_e$ analogously for $\lk_\Delta(e)$. \\
		\begin{enumerate}
			\item (Proposition \ref{subconchange}) Define $\Gamma'$ for the subdivision $\Delta'$ of $\Delta$ with respect to the edge $e$. If $\Gamma$ and $\Gamma_e$ have Boolean decompositions, so does $\Gamma'$. The subdivision adds a point to the initial non-Boolean part. \\
			
			\item (Proposition \ref{pltransbool}) Define $\widetilde{\Gamma}$ for the contraction $\widetilde{\Delta}$ of an edge $e \in \Delta$. Then, $\widetilde{\Gamma}$ has a Boolean decomposition if and only if $\widetilde{\Gamma} = \Ast_\Gamma(u)$ in a way compatible with the Boolean decomposition of $\Gamma$ (i.e. removing a vertex from the initial part $S$ disjoint from the Boolean part). For example, this is the case when a double suspension of $\lk_\Delta(e)$ is contained in $\Delta$. Note that $f_0(\widetilde{\Gamma}) = f_0(\Gamma) - 1$. \\
			
			\item (Corollary \ref{locglobbool})  Consider a collection $\mathcal{C}$ of flag Cohen--Macaualy simplicial complexes $\Delta$ satisfying the following properties: \\
			
			\begin{itemize}
				\item The collection $\mathcal{C}$ is closed under (double) suspensions. \\
				
				\item The simplicial complexes of a given dimension in $\mathcal{C}$ are PL homeomorphic to each other. \\
			\end{itemize}
			
			Suppose that $\Gamma_e$ has a Boolean decomposition for each edge $e \in \Delta$. These come from codimension 2 flag spheres when we start with a pseudomanifold. \\
			
			Then, $\Gamma$ has a Boolean decomposition if and only if the composition of a double suspension and a ``net single edge subdivision'' (truncated after degree $d$) yields a map $L$ formally satisfying the linear properties in the Hard Lefschetz theorem. \\

		\end{enumerate}
	\end{thm}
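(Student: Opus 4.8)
The plan is to prove the three parts in turn, working throughout with the dictionary between a Boolean decomposition of $\Gamma$ and a flag realization of the $\gamma$-vector of $\Delta$. If $\Gamma$ has a Boolean decomposition with initial part $S$, then counting the faces $F \cup G$ gives $f_{i-1}(\Gamma) = \sum_k f_{k-1}(S)\binom{d-2k}{i-k}$, equivalently $h(\Delta,t) = \sum_k f_{k-1}(S)\, t^k (1+t)^{d-2k}$; so such a decomposition is exactly the statement that $\gamma(\Delta)$ is the $f$-vector of the flag complex $S$, and likewise for each $\lk_\Delta(e)$ with $\Gamma_e$ and its initial part $S_e$. I would run all the bookkeeping at this level of $\gamma$-vectors.

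For part 1, the stellar edge subdivision formula $h(\Delta',t) = h(\Delta,t) + t\, h(\lk_\Delta(e),t)$ translates into $\gamma_k(\Delta') = \gamma_k(\Delta) + \gamma_{k-1}(\lk_\Delta(e))$, i.e. $f(S',t) = f(S,t) + t\, f(S_e,t)$; I would realize this by adjoining one new vertex $w$ to $S$ and setting $S' = S \cup (w * S_e)$, with $S_e$ placed inside $S$ on vertices disjoint from $[d]$. A short argument — any clique through $w$ lies in $w * S_e$ because $w$ meets only $S_e$ — shows $S'$ is flag, and $\Gamma' = \{F\cup G : F \in S', G \in 2^{[d-2|F|]}\}$ then has the correct $f$-vector, with $w$ the promised point added to the initial non-Boolean part; this is Proposition \ref{subconchange}. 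For part 2, the edge contraction $\widetilde{\Delta}$ is the inverse move, so when it preserves dimension $f(\widetilde{\Gamma},t) = f(\Gamma,t) - t\, f(\lk_\Gamma(u),t)$ for a suitable vertex $u$, and matching this against the deletion identity forces $\widetilde{\Gamma} = \Ast_\Gamma(u)$; a Boolean decomposition persists precisely when $u$ is a vertex of $S$, hence disjoint from the Boolean coordinates $[d]$, the new initial part being the still-flag complex $\Ast_S(u)$. Conversely a Boolean decomposition of $\widetilde{\Gamma}$ retains $[d]$ as a face, which pins the dimension and recovers this picture; the hypothesis $\Susp^2(\lk_\Delta(e)) \subseteq \Delta$ is what makes room to perform the contraction as such a legal, dimension-preserving move, and $f_0(\widetilde{\Gamma}) = f_0(\Gamma) - 1$ is simply the removal of $u$. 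This is Proposition \ref{pltransbool}.

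For part 3, since the $(d-1)$-dimensional complexes in $\mathcal{C}$ are PL homeomorphic and $\mathcal{C}$ is closed under double suspension, there is a model $\Sigma = \Susp^2(\Delta_0) \in \mathcal{C}$ which shares a common subdivision with $\Delta$, so that $\Delta$ is obtained from $\Sigma$ by a finite alternating sequence of the edge subdivisions and edge contractions of parts 1 and 2. Running those two parts along the sequence, each subdivision adjoins a cone vertex to the initial part (the disjoint, non-Boolean contribution) while each contraction deletes an antistar vertex from it, and the double suspension supplies the $(1+t)^2$ growth of the Boolean part; after truncating past degree $d$ — the double suspension raising the socle degree from $d$ to $d+2$ — the net effect of the whole sequence is packaged as one operator $L$ on the graded space whose degree-$k$ piece records the $k$-element faces contributed in degree $k$. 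Now the Boolean decomposition of $\Gamma$ exists iff $\gamma(\Delta)$ is the $f$-vector of a flag complex, the realizability of such $\gamma$-vectors being controlled by \cite{CCV}, \cite{BFS} with the explicit constructions of \cite{CV}; given the hypothesis that every $\Gamma_e$ is Boolean — the $\gamma(\lk_\Delta(e))$ being flag $f$-vectors, coming from codimension-2 flag spheres when $\Delta$ is a pseudomanifold — the recursion $\gamma_k(\Delta') = \gamma_k(\Delta) + \gamma_{k-1}(\lk_\Delta(e))$ patches these local flag realizations into a global one exactly when each iterate $L^{d-2k}$ is injective on the degree-$k$ primitive part, i.e. when $L$ formally satisfies the linear conclusions of the Hard Lefschetz theorem. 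This is Corollary \ref{locglobbool}.

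The step I expect to be the main obstacle is in part 3: showing that the net effect of the subdivision/contraction sequence joining $\Delta$ to the double-suspension model is genuinely a single well-defined operator $L$, independent of the chosen sequence up to the relevant equivalence, and that injectivity of its iterates is exactly equivalent to the patching of the links' Boolean decompositions into one for $\Gamma$. The delicate point is the part-2 compatibility requirement that the contracted vertex be disjoint from the Boolean part: this is what rules out incompatible or non-nested choices of the initial parts $S_e$ of the links obstructing the global decomposition, and reconciling it with the degree-$d$ truncation is where the real work lies — parts 1 and 2 being comparatively routine once the $\gamma$-vector dictionary is in place.
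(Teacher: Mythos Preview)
Your proposal is correct and follows essentially the same route as the paper: the subdivision identity $h_{\Delta'}(t)=h_\Delta(t)+t\,h_{\lk_\Delta(e)}(t)$, the cone/antistar constructions $\Gamma'=\Gamma\cup(\Gamma_e*u)$ and $\widetilde{\Gamma}=\Ast_\Gamma(u)$, and the formal Lefschetz bookkeeping listing $P^{d-2k}=2^{[d-2k]}$ in part~3 all match the paper's arguments for Propositions~\ref{subconchange}, \ref{pltransbool} and Corollary~\ref{locglobbool}. Your $\gamma$-vector reframing (tracking the initial parts $S$, $S_e$ rather than $\Gamma$, $\Gamma_e$) is an equivalent repackaging of the same computation; note only that the paper's Definition~\ref{booldef} does not require $S$ to be flag, so your flagness check for $S'$ is extra, and that the paper's own proof of part~3 is at exactly the formal level you anticipate---it too records the invertibility of suspensions and subdivisions up to degree $d$ and leaves the well-definedness of $L$ as a correspondence rather than a fully verified construction.
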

	
	The constructions we consider also suggest potential algebraic maps related to the combinatorial analogue of the Lefschetz maps considered above (Remark \ref{algdisc}).

	\section{Boolean decompositions, transformations, and local-global properties via Lefschetz maps}
	
	We would like to study the local-global behavior of decompositions of simplicial complexes whose $f$-vectors give $h$-vectors of flag Cohen--Macaulay simplicial complexes $\Delta$. Before specifying the families which we are studying, we define the types of decompositions that we are interested in. \\
	
	\begin{defn} (Proposition 6.2 on p. 1377 of \cite{NPT}) \\ \label{booldef}
		A \textbf{Boolean decomposition} of a $(d - 1)$-dimensional simplicial complex $\Gamma$ is a decomposition \[ \Gamma = \{ F \cup G : F \in S, G \in 2^{[d - 2|F|]} \} \] where $S \le \Gamma$ is the subcomplex consisting of faces of $\Gamma$ disjoint from $[d]$. We will refer to $F \in S$ as the initial part and $G \in 2^{[d - 2|F|]}$ as the Boolean part. \\
	\end{defn}

	In particular, we would like to consider collections $\mathcal{C}$ of simplicial complexes satisfying the following properties via local properties (which come from codimension 2 flag spheres when we start with a pseudomanifold): \\
	
	\begin{itemize}
		\item The collection $\mathcal{C}$ is closed under (double) suspensions. \\
		
		\item The simplicial complexes of a given dimension in $\mathcal{C}$ are PL homeomorphic to each other. \\
	\end{itemize}
	
	The first property comes from the structure of the Boolean decomposition from Definition \ref{booldef}. Each suspension multiplies the $h$-polynomial of the starting simplicial complex by $t + 1$ (e.g. see p. 516 of \cite{NP}). On the $f$-vector side, this is like a double coning of $\Gamma$. We move between simplicial complexes satisfying the second property using the following result: \\

	\begin{thm} (Lutz--Nevo, Theorem 1.2 on p. 70, 77 -- 78 of \cite{LN}) \\
		Two flag simplicial complexes $\Delta_1$ and $\Delta_2$ are PL homeomorphic if and only if they can be connected by a sequences of edge subdivisions and their inverses such that all the complexes in the sequences are flag. Note that the edges contracted must be admissible (i.e. not contained in an induced 4-cycle). \\
	\end{thm}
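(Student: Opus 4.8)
The plan is to derive the equivalence from the classical stellar theory of PL homeomorphisms, the additional content being to keep every complex in the chain flag and every contraction admissible. Sufficiency is the easy direction: an edge subdivision is the stellar subdivision at a $1$-face, hence realizes a subdivision of the underlying polyhedron and is a PL homeomorphism, and the same applies to its inverse; thus flag complexes connected by such moves are PL homeomorphic, whatever the intermediate complexes are.

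For necessity I would start from the classical fact (Alexander's starring theorem, Newman) that PL homeomorphic finite complexes become combinatorially isomorphic after suitable iterated stellar subdivisions of each --- equivalently, that there is a common simplicial refinement obtained from $\Delta_1$ and from $\Delta_2$ by finite sequences of stellar subdivisions --- and upgrade this to a chain of edge subdivisions and inverse edge subdivisions staying flag. Two observations carry the bookkeeping. First, \emph{the edge subdivision of a flag complex is again flag}: if $\Delta'$ is obtained from a flag complex $\Delta$ by subdividing an edge $e=\{u,w\}$ with new vertex $x$, then $\lk_{\Delta'}(x)=\Susp(\lk_\Delta(e))$ with the two apexes $u,w$ non-adjacent, and a short analysis of a hypothetical minimal non-face of size $\ge 3$ --- split according to whether it contains $x$, and whether it contains both $u$ and $w$ --- reduces in every case to the flagness of $\Delta$; so a chain built purely from edge subdivisions and started at a flag complex stays flag. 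Second, \emph{an edge subdivision can be undone by an admissible contraction}: in the situation above the edge $\{x,u\}$ satisfies the link condition (one checks $\lk_{\Delta'}(x)\cap\lk_{\Delta'}(u)=\lk_{\Delta'}(\{x,u\})=\lk_\Delta(e)$) and lies in no induced $4$-cycle, since a cycle $x-u-a-b-x$ would force $b=w$ (every other neighbour of $x$ is already adjacent to $u$), whence $\{a,u\},\{a,w\},\{u,w\}\in\Delta$ and, by flagness, $\{a,u,w\}\in\Delta$, so $a\in\lk_\Delta(e)$ and $a\sim x$, contradicting inducedness; contracting $\{x,u\}$ then returns $\Delta$.

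It remains to realize the classical stellar moves by flag edge moves. One must first accept that a stellar subdivision at a face $\sigma$ with $\dim\sigma\ge 2$ is \emph{not} itself a composition of edge subdivisions --- e.g.\ starring a $2$-face adds one vertex yet produces a triangulation no single edge move yields --- so the classical chain cannot be simulated move by move. The strategy I would pursue is: (i) show the barycentric subdivision of an arbitrary complex is reachable by edge subdivisions, introducing the new barycenters by first subdividing suitable interior ``diagonals''; and (ii) show that, along the classical chain, each stellar move becomes a string of edge moves once every complex is replaced by a sufficiently high iterated barycentric subdivision --- a flag normal form for the neighbourhood $\St(\sigma)$ of the starred face. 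Combined with the two observations above, this produces a chain from $\Delta_1$ to $\Delta_2$ that uses only edge subdivisions and admissible inverse edge subdivisions and keeps every complex flag.

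The step I expect to be the main obstacle is (ii): showing that restricting to edge subdivisions does not enlarge the equivalence classes among flag complexes, i.e.\ producing, for each classical stellar move, an explicit sequence of flag edge subdivisions and admissible contractions that realizes it on a common refinement. This is an induction on dimension carrying substantial combinatorial bookkeeping about links and induced $4$-cycles, and it is the technical heart of \cite{LN}; the two observations about flagness and admissibility are precisely what make such an induction close up within the class of flag complexes.
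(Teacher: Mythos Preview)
The paper under review does not contain a proof of this theorem: it is quoted verbatim from Lutz--Nevo \cite{LN} and used as a black box to justify moving between PL homeomorphic flag complexes by edge subdivisions and admissible contractions. There is therefore no ``paper's own proof'' to compare your proposal against.

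As an independent matter, your sketch is in the right spirit and your two local observations (edge subdivision preserves flagness; the new edge created by an edge subdivision is admissible and its contraction recovers the original complex) are correct and are indeed the hinges of any such argument. You also correctly flag that the substantive difficulty is your step~(ii). Your proposed route through iterated barycentric subdivisions as a ``flag normal form'' is plausible but is itself a nontrivial claim that you have not justified, and it is not the route taken in \cite{LN}: there the authors work more directly, showing that an arbitrary stellar subdivision (and weld) on a flag complex can be factored into edge subdivisions and admissible edge contractions while staying in the flag category, by an explicit inductive scheme on the dimension of the subdivided face. So your outline identifies the right obstacles but leaves the decisive technical step as a promissory note; if you want a self-contained argument you should either carry out the barycentric strategy in detail or consult \cite{LN} for the direct factorisation.
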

	
	Although there are general results of Caviglia--Constantinescu--Varbaro (Corollary 2.3 on p. 474 of \cite{CCV}) and Bj\"orner--Frankl--Stanley (Theorem on p. 23 of \cite{BFS}) which imply the existence of such simplicial complexes when the given simplicial complex $\Delta$ is flag and Cohen--Macaulay, we will use constructions used to study special cases from earlier work of Constantinescu--Varbaro \cite{CV} constructing concrete examples of the simplicial complexes involved and its recursive behavior. We use these tools to look at how $\Gamma$ such that $f(\Gamma) = h(\Delta)$ can vary among PL homeomorphic simplicial complexes $\Delta$. \\
	
	\begin{defn} (Definition 5.3.4 on p. 232 of \cite{BH}, p. 60 of \cite{St}) \\
		Given a simplicial complex $\Delta$ and a face $F \in \Delta$, its link of $\Delta$ over $F$ is defined as \[ \lk_\Delta(F) \coloneq \{ A \in \Delta : A \cup F \in \Delta, A \cap F = \emptyset \}. \]
	\end{defn}

	\begin{prop} \label{subconchange}
		Let $\Delta$ be a flag Cohen--Macaulay simplicial complex and $\Gamma$ be a simplicial complex such that $h(\Delta) = f(\Gamma)$ (Corollary 2.3 on p. 474 of \cite{CCV} and Theorem 1 on p. 23 of \cite{BFS}). Define $\Gamma_e$ analogously for $\lk_\Delta(e)$. \\
		
		\begin{enumerate}
			\item If $\Delta'$ is the (stellar) subdivision of $\Delta$ with respect to an edge $e \in \Delta$, we have that $h(\Delta') = f(\Gamma')$, where \[ \Gamma' = \Gamma *_{\Gamma_e} u = \Gamma \cup (\Gamma_e * u) \] with $\Gamma_e$ a simplicial complex such that $h(\lk_\Delta(e)) = f(\Gamma_e)$ considered as a subcomplex of $\Gamma$. \\
			
			\item Suppose that $\widetilde{\Delta}$ is the contraction of an edge $e \in \Delta$. Then, we have that $h(\widetilde{\Delta}) = f(\widetilde{\Gamma})$ with \[ \widetilde{\Gamma} = \Gamma - (\Gamma_e + u), \] where the removed term denotes faces formed by adding a new ``outside'' vertex $u$ added to faces of $\Gamma_e$. In this construction, we have $\widetilde{\Gamma} = \Ast_\Gamma(u)$ and $\Gamma_e = \lk_\Gamma(u)$. Note that $u$ does \emph{not} belong to the unique facet of $\Gamma$ if $h_d(\Delta) = 1$ and $h_d(\widetilde{\Delta}) = 1$. \\
			
		\end{enumerate}
	\end{prop}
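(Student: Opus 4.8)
The plan is to track how the $h$-polynomial behaves under stellar subdivision and edge contraction, then translate each identity into an $f$-vector statement via the standard correspondence $f_i(\Gamma) = h_i(\Delta)$ and the join/astral formulas. For part (1), I would first recall that stellar subdivision of $\Delta$ along an edge $e$ replaces the open star of $e$ by the join of the boundary $\partial e$ (two points) with the suspension-type cone over $\lk_\Delta(e)$; concretely the face enumeration gives $f(\Delta') = f(\Delta) + t\cdot\big(f(\lk_\Delta(e)) \text{ shifted}\big)$ in a way that, after passing to $h$-polynomials using the Dehn–Sommerville-style bookkeeping for Cohen–Macaulay complexes, yields $h(\Delta') = h(\Delta) + t\cdot h(\lk_\Delta(e))$ with the degree shift matching the codimension of $e$. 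The key point is that both $\Delta$ and $\lk_\Delta(e)$ are Cohen–Macaulay (links of Cohen–Macaulay complexes are Cohen–Macaulay, and flag CM subdivisions stay flag CM by Lutz–Nevo), so the $h$-vector is the honest invariant to compute with. Then I would observe that $f(\Gamma *_{\Gamma_e} u) = f(\Gamma \cup (\Gamma_e * u)) = f(\Gamma) + t\cdot f(\Gamma_e)$ because the new faces are exactly $u$ together with each face of $\Gamma_e$, contributing one extra dimension; matching this with the $h$-polynomial identity and using $f(\Gamma) = h(\Delta)$, $f(\Gamma_e) = h(\lk_\Delta(e))$ gives $f(\Gamma') = h(\Delta')$ as claimed, with $\Gamma_e$ sitting inside $\Gamma$ as a subcomplex (this inclusion is part of the hypothesis inherited from the ambient construction of $\Gamma$).

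For part (2), edge contraction is essentially the inverse operation at the level of $h$-polynomials: if $\widetilde{\Delta}$ is obtained from $\Delta$ by contracting the admissible edge $e$, then $\Delta$ is (PL homeomorphic to, hence has the same CM $h$-vector as) a complex obtained from $\widetilde{\Delta}$ by an edge subdivision, so $h(\Delta) = h(\widetilde{\Delta}) + t\cdot h(\lk)$ for the appropriate link, equivalently $h(\widetilde{\Delta}) = h(\Delta) - t\cdot h(\lk_\Delta(e))$ after identifying the relevant link. On the $\Gamma$ side, removing from $\Gamma$ all faces of the form $\{u\} \cup A$ with $A \in \Gamma_e$ is precisely forming the antistar $\Ast_\Gamma(u)$ when $\lk_\Gamma(u) = \Gamma_e$, and then $f(\Ast_\Gamma(u)) = f(\Gamma) - t\cdot f(\lk_\Gamma(u)) = f(\Gamma) - t\cdot f(\Gamma_e)$. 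Combining with $f(\Gamma) = h(\Delta)$ and $f(\Gamma_e) = h(\lk_\Delta(e))$ gives $f(\widetilde{\Gamma}) = h(\widetilde{\Delta})$. The identities $\widetilde{\Gamma} = \Ast_\Gamma(u)$ and $\Gamma_e = \lk_\Gamma(u)$ are then read off directly from the definition of the removed term. Finally, the parenthetical remark about $u$ not lying in the unique facet when $h_d(\Delta) = h_d(\widetilde{\Delta}) = 1$ follows by comparing top-degree coefficients: $h_d(\Delta) = 1$ forces $\Gamma$ to have a unique top face of size $d$, and $h_d(\widetilde{\Delta}) = 1$ forces $\Ast_\Gamma(u)$ to still contain a face of size $d$, which is only possible if that top face avoids $u$.

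The main obstacle I expect is getting the degree-shift bookkeeping exactly right in the passage from $f$ to $h$ under stellar subdivision — i.e. verifying that the contribution of $\lk_\Delta(e)$ enters $h(\Delta')$ with coefficient $t^1$ (one extra vertex added, so degree raised by one) rather than with some codimension-dependent shift, and checking that no correction terms survive. This is where the flag and Cohen–Macaulay hypotheses do real work: flagness guarantees the subdivision stays in the class where the Lutz–Nevo theorem applies (so the $h$-vector transformation is the clean one), and Cohen–Macaulayness (together with CM-ness of links) ensures the $h$-vector is combinatorially well-behaved under these local moves. A secondary subtlety is confirming that $\Gamma_e$ really embeds as a subcomplex of $\Gamma$ compatibly — but this is built into how $\Gamma$ and $\Gamma_e$ are produced in the recursive construction from \cite{CV}, so I would cite that and not reprove it here. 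The rest is a direct matching of polynomial identities.
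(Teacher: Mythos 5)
Your part (1) is essentially the paper's argument: quote the identity $h_{\Delta'}(t) = h_\Delta(t) + t\,h_{\lk_\Delta(e)}(t)$ for an edge subdivision, note that the faces of $\Gamma' \setminus \Gamma$ are exactly $\{u\} \cup A$ for $A \in \Gamma_e$, and match coefficients. That part is fine and matches the paper.

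Part (2) has a genuine gap in the justification of the $h$-polynomial identity. You derive $h(\widetilde{\Delta}) = h(\Delta) - t\,h(\lk_\Delta(e))$ by asserting that $\Delta$ is ``PL homeomorphic to, hence has the same CM $h$-vector as'' an edge subdivision of $\widetilde{\Delta}$. PL homeomorphism does not preserve $h$-vectors (the boundaries of the octahedron and the icosahedron are PL homeomorphic flag spheres with different $h$-vectors), so that bridge is false as stated. Moreover, a general admissible contraction of $e=(a,b)$ does not exhibit $\Delta$ as a single stellar edge subdivision of $\widetilde{\Delta}$: the inverse of subdividing $(p,q)$ with new vertex $v$ is specifically the contraction of $(v,p)$, and recovering $\Delta$ from $\widetilde{\Delta}$ by one subdivision would force $\lk_\Delta(a)$ to be the join $\partial e * \lk_\Delta(e)$, which need not hold. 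The paper avoids this by a direct face count: the faces $F, F\cup a, F \cup b, F\cup e$ with $F \in \lk_\Delta(e)$ contribute $(1+t)^2 f_{\lk_\Delta(e)}(t)$ to $f_\Delta(t)$ and collapse to $(1+t)f_{\lk_\Delta(e)}(t)$ after setting $a=b$, giving $f_{\widetilde{\Delta}}(t) = f_\Delta(t) - t(1+t)f_{\lk_\Delta(e)}(t)$, and the standard $f$-to-$h$ substitution then yields $h_{\widetilde{\Delta}}(w) = h_\Delta(w) - w\,h_{\lk_\Delta(e)}(w)$. You should supply this (or an equivalent) computation. A secondary, smaller point: the existence of $\Gamma_e$ as a subcomplex of $\widetilde{\Gamma}$ (equivalently, of a vertex $u$ of $\Gamma$ with $\lk_\Gamma(u) = \Gamma_e$) is not automatic for an arbitrary $\Gamma$ with the right $f$-vector; the paper justifies it via Stanley's inequality $h_i(\lk_\Delta(e)) \le h_i(\widetilde{\Delta})$ for Cohen--Macaulay subcomplexes together with compression complexes, and then \emph{constructs} $\Gamma$ as $\widetilde{\Gamma} \cup (\Gamma_e * u)$. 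Your deferral to the recursive construction of \cite{CV} points in the right direction but leaves this existence step thinner than it should be. Your remark on $u$ avoiding the unique top face when $h_d(\Delta)=h_d(\widetilde{\Delta})=1$ is fine.
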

	
	\begin{proof}
		\begin{enumerate}
			\item Recall (e.g. from p. 36 of \cite{Athgam}) that \[ h_{\Delta'}(t) = h_\Delta(t) + t h_{\lk_\Delta(e)}(t), \] where $\Delta'$ denotes the (stellar) subdivision of $\Delta$ with respect to the edge $e \in \Delta$. In particular, this means that \[ h_i(\Delta') = h_i(\Delta) + h_{i - 1}(\lk_\Delta(e)) \] and \[ f_{i - 1}(\Gamma') = f_{i - 1}(\Gamma) + f_{i - 2}(\Gamma_e). \] Following work of Constantinescu--Varbaro \cite{CV}, this means that \[ \Gamma' = \Gamma *_{\Gamma_e} u = \Gamma \cup (\Gamma_e * u) \] for some new disjoint vertex $u$. \\
			
			\item Now consider edge contractions. In other words, we are looking at how to express $\Gamma$ in terms of $\Gamma'$. Since \[ h_{\Delta'}(t) = h_\Delta(t) + t h_{\lk_\Delta(e)}(t), \] we have \[ h_\Delta(t) = h_{\Delta'}(t) - t h_{\lk_\Delta(e)}(t). \]
			
			Note that $\lk_\Delta(e) \cong \lk_{\Delta'}(p, v) = \lk_{\Delta'}(v, q)$, where $v \in V(\Delta')$ is the subdividing vertex of the edge $e \in \Delta$. \\
			
			Suppose that $\Gamma'$ itself has a Boolean decomposition. Assuming that $\Gamma_e$ also has a Boolean decomposition by induction on dimension, the construction $\Gamma' = \Gamma *_{\Gamma_e} u = \Gamma \cup (\Gamma_e * u)$ above implies that \[  \Gamma' = \Gamma \cup \{ (F_e \cup u) \cup G_e  : F_e \in S_e, G_e \in 2^{[d - 2|F_e| - 2]} \} \] and \[ \Gamma = \Gamma' \setminus \{ (F_e \cup u) \cup G_e  : F_e \in S_e, G_e \in 2^{[d - 2|F_e| - 2]} \}. \]
			
			This amounts to removing the faces that contain $u$, which is a vertex in $S'$. \\

			We can also approach this more directly. Given a flag sphere $\Delta$, let $\widetilde{\Delta}$ be the contraction of $\Delta$ with respect to an edge $e = (a, b)$ obtained by setting $a = b$. Note that we do not take arbitrary edges (which don't necessarily preserve flagness), but ones that are admissible (i.e. not contained in any induced 4-cycle -- see p. 79 -- 80 of \cite{LN}). In addition, we note that boundaries of cross polytopes yield minimal $h$-vectors for doubly Cohen--Macaulay simplicial complexes (Theorem 1.3 on p. 19 of \cite{Athsom}). We can record the changes in the faces below:
			
			\begin{itemize}
				\item Faces that do \emph{not} contain $a$ or $b$ are unaffected.
				
				\item Faces that contain $e$ are removed.
				
				\item  Faces that only contain $a$ or only contain $b$ now have a common vertex. If they come from adding $a$ to elements of $\lk_\Delta(a) \setminus (\lk_\Delta(a) \cap \lk_\Delta(b)) = \lk_\Delta(a) \setminus \lk_\Delta(e)$, they now have a vertex in common with those coming from adding $b$ to $\lk_\Delta(b) \setminus (\lk_\Delta(a) \cap \lk_\Delta(b)) = \lk_\Delta(b) \setminus \lk_\Delta(e)$. However, such pairs of faces remain distinct.
				
				\item  In other words, the change in the number of faces come from those that are no longer distinct or no longer exist after setting $a = b$. This means faces of the form $F \cup a$ or $F \cup b$ for $F \in \lk_\Delta(e)$ (which are merged) or $F \cup e$ for $F \in \lk_\Delta(e)$ (which are removed). \\ 
			\end{itemize}
			
			Putting this together, we have
			
			\begin{align*}
				f_\Delta(t) &= f_{\St_\Delta(e)}(t) + f_{\Delta \setminus \St_\Delta(e)}(t) \\
				&= (t + 1)^2 f_{\lk_\Delta(e)}(t) + f_{\Delta \setminus \St_\Delta(e)}(t) \\
				&= t^2 f_{\lk_\Delta(e)}(t) + 2t f_{\lk_\Delta(e)}(t) + f_{\lk_\Delta(e)}(t) + f_{\Delta \setminus \St_\Delta(e)}(t)
			\end{align*}
			
			and
			
			\begin{align*}
				f_{\widetilde{\Delta}}(t) &= t f_{\lk_\Delta(e)}(t) + f_{\lk_\Delta(e)}(t) + f_{\Delta \setminus \St_\Delta(e)}(t),
			\end{align*}

			which implies that
			
			\begin{align*}
				f_{\widetilde{\Delta}}(t) &= f_\Delta(t) - (t^2 + t) f_{\lk_\Delta(e)}(t) \\
				&= f_\Delta(t) - t(1 + t) f_{\lk_\Delta(e)}(t).
			\end{align*}
			
			Using the identity \[ \sum_{i = 0}^d f_{i - 1} (t - 1)^{d - i} = \sum_{k = 0}^d h_k t^{d - k}, \] we can relate their $h$-polynomials after substituting in $t = \frac{1}{u - 1}$ and then $w = \frac{1}{u}$:
			
			\begin{align*}
				f_{\widetilde{\Delta}} \left( \frac{1}{u - 1} \right) &= f_\Delta \left( \frac{1}{u - 1} \right) - \frac{1}{u - 1} \cdot \frac{u}{u - 1} f_{\lk_\Delta(e)} \left( \frac{1}{u - 1} \right) \\
				\Longrightarrow (u - 1)^d f_{\widetilde{\Delta}} \left( \frac{1}{u - 1} \right) &= (u - 1)^d f_\Delta \left( \frac{1}{u - 1} \right) - u (u - 1)^{d - 2} f_{\lk_\Delta(e)} \left( \frac{1}{u - 1} \right) \\
				\Longrightarrow u^d h_{\widetilde{\Delta}} \left( \frac{1}{u} \right) &= u^d h_\Delta \left( \frac{1}{u} \right) - u \cdot u^{d - 2} h_{\lk_\Delta(e)} \left( \frac{1}{u} \right) \\
				\Longrightarrow h_{\widetilde{\Delta}} \left( \frac{1}{u} \right) &= h_\Delta \left( \frac{1}{u} \right) - \frac{1}{u} h_{\lk_\Delta(e)} \left( \frac{1}{u} \right) \\
				\Longrightarrow h_{\widetilde{\Delta}}(w) &= h_\Delta(w) - w h_{\lk_\Delta(e)}(w).
			\end{align*}
			
			In other words, we have \[ h_i(\widetilde{\Delta}) = h_i(\Delta) - h_{i - 1}(\lk_\Delta(e)). \]
			
			For simplicial complexes $\widetilde{\Gamma}$, $\Gamma$, and $\Gamma_e$ such that $f(\widetilde{\Gamma}) = h(\widetilde{\Delta})$, $f(\Gamma) = h(\Delta)$, and $f(\Gamma_e) = h(\lk_\Delta(e))$, this means that \[ f_{i - 1}(\widetilde{\Gamma}) = f_{i - 1}(\Gamma) - f_{i - 2}(\Gamma_e). \]
			
			Note that $\widetilde{\Delta}$ still has the property of being a flag sphere and $\lk_\Delta(e) \le \widetilde{\Delta}$ is a subcomplex which is unchanged after the edge contraction. Note that both of these simplicial complexes are Cohen--Macaulay since they are both (flag) Cohen--Macaulay simplicial complexes, which are preserved by taking links of faces and edge contractions. This means that $h_i(\lk_\Delta(e)) \le h_i(\widetilde{\Delta})$ for all $i$ (Theorem 9.1 on p. 126 -- 127 of \cite{St}). Using compression complexes, this means that we can take $\Gamma_e \le \widetilde{\Gamma}$ as a subcomplex. We can set \[ \Gamma = \widetilde{\Gamma} *_{\Gamma_e} u = \widetilde{\Gamma} \cup (\Gamma_e * u) \] for some ``new'' vertex $u$ of $\Gamma$ not belonging to $\widetilde{\Gamma}$ or $\Gamma_e$ (e.g. see p. 96 -- 97  of \cite{CV}). Here, we have $\widetilde{\Gamma} = \Ast_\Gamma(u)$ and $\Gamma_e = \lk_\Gamma(u)$. Note that $f_0(\widetilde{\Gamma}) = f_0(\Gamma) - 1$ and that the missing vertex cannot be in $[d]$ since $\dim \Gamma = \dim \widetilde{\Gamma}$. Equivalently, we have \[ \widetilde{\Gamma} = \Gamma - (\Gamma_e + u) \] with the second part meaning faces of $\Gamma$ formed by adding $u$ to those of $\Gamma_e$ since $\Gamma_e \le \widetilde{\Gamma}$ is a subcomplex. 
		\end{enumerate}
	\end{proof}
	
	We can express the condition for the contraction to also induce a Boolean decomposition in a simpler way. \\
	
	\begin{prop} \label{pltransbool}
		Use the notation from Proposition \ref{subconchange}. Suppose that $\Gamma$ and $\Gamma_e$ both have Boolean decompositions. \\
		
		\begin{enumerate}
			\item $\Gamma'$ also has a Boolean decomposition. \\
			
			\item  $\widetilde{\Gamma}$ has a Boolean decomposition if and only if $\widetilde{\Gamma} = \Ast_\Gamma(u)$ in a way compatible with the Boolean decomposition of $\Gamma$ (i.e. removing a vertex from the initial part $S$ disjoint from the Boolean part). For example, this is the case when a double suspension of $\lk_\Delta(e)$ is contained in $\Delta$. Note that $f_0(\widetilde{\Gamma}) = f_0(\Gamma) - 1$. \\
		\end{enumerate}
		
	\end{prop}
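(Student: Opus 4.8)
The plan is to transport a Boolean decomposition along the moves of Proposition~\ref{subconchange} by keeping the distinguished $d$-set fixed and only recording what happens to the initial subcomplex $S$. Throughout write $(S,[d])$ for the Boolean data of $\Gamma$ (so $\dim\Gamma=d-1$) and $(S_e,[d-2])$ for that of $\Gamma_e$ (so $\dim\Gamma_e=d-3$), and use the compression-complex realizations of \cite{CV} to arrange that $\Gamma_e\le\Gamma$ is an induced subcomplex with $[d-2]\subseteq[d]$ and $S_e\le S$, so that the embedding carries Boolean data to Boolean data. The one structural fact I lean on is that in \emph{any} Boolean decomposition of a $(d-1)$-dimensional complex the face $\emptyset\cup[d]=[d]$ is the unique face with $d$ vertices (for $F\in S$ with $|F|=k\ge1$ one has $|F\cup G|\le k+(d-2k)=d-k<d$); hence $f_{d-1}=1$ and the distinguished $d$-set must be the unique top facet. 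Since $\dim\widetilde\Gamma=\dim\Gamma$ under the hypotheses of Proposition~\ref{subconchange} -- the correction term $w\,h_{\lk_\Delta(e)}(w)$ has degree $\le d-1$ -- the same applies to $\widetilde\Gamma$.

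For part~(1): by Proposition~\ref{subconchange}(1) we have $\Gamma'=\Gamma\cup(\Gamma_e*u)$ with $u$ a new vertex, so the faces of $\Gamma'$ not already in $\Gamma$ are exactly $\{u\}\cup\sigma$ with $\sigma\in\Gamma_e$. I would put $S':=S\cup(u*S_e)$ (the cone on $S_e$ with apex $u$, glued to $S$ along $S_e$) and check the two requirements of Definition~\ref{booldef}: first, that $S'$ is the full subcomplex of $\Gamma'$ on $V(\Gamma')\setminus[d]=(V(\Gamma)\setminus[d])\cup\{u\}$; second, that $\Gamma'=\{F'\cup G':F'\in S',\ G'\in2^{[d-2|F'|]}\}$. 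For the second point, faces avoiding $u$ reproduce $\Gamma$ with its given decomposition, while a face meeting $u$ has the form $(\{u\}\cup F_e)\cup G_e$ with $F_e\in S_e$ and $G_e\in2^{[(d-2)-2|F_e|]}$; since $|\{u\}\cup F_e|=|F_e|+1$ we get $d-2|\{u\}\cup F_e|=(d-2)-2|F_e|$, so the Boolean part of $\Gamma_e$ is carried exactly onto the top strip of the Boolean part over $\{u\}\cup F_e$. This is bookkeeping once the nesting $S_e\le S$ is in place.

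For part~(2), the ($\Leftarrow$) direction runs the same bookkeeping in reverse: if $\widetilde\Gamma=\Ast_\Gamma(u)$ with $u$ a vertex of $S$ (hence $u\notin[d]$), then $\widetilde\Gamma$ consists of the $F\cup G\in\Gamma$ with $u\notin F$ (automatically $u\notin G$), so $\widetilde\Gamma=\{F\cup G:F\in\Ast_S(u),\ G\in2^{[d-2|F|]}\}$ with $\Ast_S(u)$ the full subcomplex of $\widetilde\Gamma$ on $V(\widetilde\Gamma)\setminus[d]$, a Boolean decomposition. For ($\Rightarrow$): given a Boolean decomposition of $\widetilde\Gamma$, its distinguished $d$-set is its unique top facet; since $\widetilde\Gamma=\Ast_\Gamma(u)\subseteq\Gamma$ and $\dim\widetilde\Gamma=\dim\Gamma$, that facet is a $d$-face of $\Gamma$, hence equals the unique one $[d]$, so $u\notin[d]$, i.e.\ $u\in V(S)$; and the initial subcomplex of $\widetilde\Gamma$, being the full subcomplex on $V(\widetilde\Gamma)\setminus[d]=V(S)\setminus\{u\}$, is then forced to be $\Ast_S(u)$ -- which is exactly the asserted compatibility. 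For the stated sufficient condition, $\Susp^2\lk_\Delta(e)\le\Delta$ yields $h_i(\Delta)\ge\bigl[(1+t)^2h_{\lk_\Delta(e)}(t)\bigr]_i$ for all $i$ (links and subcomplexes of a flag Cohen--Macaulay complex are Cohen--Macaulay, so the monotonicity already used in Proposition~\ref{subconchange} applies), i.e.\ $f(\Gamma)\ge(1+t)^2f(\Gamma_e)$ coefficientwise; the complex realizing $(1+t)^2f(\Gamma_e)$ with a Boolean decomposition is $\{F_e\cup G:F_e\in S_e,\ G\in2^{[d-2|F_e|]}\}$ (same initial part $S_e$, Boolean part widened by two), and under the Constantinescu--Varbaro realization it sits inside $\Gamma$ in such a way that the coning vertex $u$ (with $\lk_\Gamma(u)=\Gamma_e$) is one of the non-Boolean vertices supporting the initial part above $S_e$; hence $u\in V(S)$ and the first part of~(2) applies.

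I expect the main obstacle to be the two ``compatibility of compression complexes'' invocations: verifying that the colex-initial realizations of $h(\lk_\Delta(e))\le h(\Delta)$ -- and of $(1+t)^2h(\lk_\Delta(e))\le h(\Delta)$ in the last step -- may be chosen to nest as induced subcomplexes respecting both the distinguished set $[d]$ and the initial subcomplex $S$, with the coning vertex $u$ landing where claimed. Once that is granted, the rest reduces to the index identity $d-2|F\cup\{u\}|=(d-2)-2|F|$ and the uniqueness of the top facet.
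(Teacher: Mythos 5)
Your proposal is correct and follows essentially the same route as the paper: part (1) is the same index bookkeeping $d-2(|F_e|+1)=(d-2)-2|F_e|$ showing the cone $\Gamma_e*u$ lands in the initial part with the Boolean strip narrowed by two, and part (2) is the same "dimension is unchanged, so only the vertex set of $S$ can shrink" argument, with the double-suspension case handled via double coning of $\Gamma_e$. Your explicit observation that $[d]$ is the unique top facet of any Boolean decomposition (hence pinned down in $\widetilde{\Gamma}\subseteq\Gamma$) makes the forward direction of (2) cleaner than the paper's version, and your flagged reliance on compatible nesting of the compression-complex realizations is shared by the paper's own proof.
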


	\begin{proof}
		\begin{enumerate}
			% Check first sentence below...
			\item Add the vertex $u$ added to the subcomplex $S$ disjoint from Boolean part. If there are Boolean decompositions \[ \Gamma = \{ F \cup G : F \in S, G \in 2^{[d - 2|F|]} \} \] and \[ \Gamma_e = \{ F_e \cup G_e  : F_e \in S_e, G_e \in 2^{[d - 2|F_e| - 2]} \}, \] we have that \[ \Gamma_e * u = \Gamma_e \cup \{ (F_e \cup u) \cup G_e  : F_e \in S_e, G_e \in 2^{[d - 2|F_e| - 2]} \}. \] The latter set of terms give something that already is compatible with the Boolean decomposition in $\Gamma$ as a whole since adding an additional vertex to the the part of the face belonging to the initial component $S$ decreases the number of ``available'' Boolean parts by 2. In particular, this implies that \[ \Gamma' = \Gamma \cup \{ (F_e \cup u) \cup G_e  : F_e \in S_e, G_e \in 2^{[d - 2|F_e| - 2]} \} \] since $\Gamma_e \le \Gamma$ is a subcomplex. \\
			
			\item In order to have a Boolean decomposition, the only thing that can be changed is a dimension or a restriction on $S$ (e.g. on the vertex set of $S$). Since there is no change in dimension between $\Gamma$ and $\widetilde{\Gamma}$, the only thing we can do is reduce the vertex set. Note that $f_0(\widetilde{\Gamma}) = f_0(\Gamma) - 1$. In order for all the Boolean parts to be attained, we need to have $\widetilde{\Gamma} = \Ast_\Gamma(u)$ (i.e. the entire subcomplex induced by vertex restriction). If we can take some double suspension of $\lk_\Delta(e)$ and stay in $\Delta$, this is the same as double coning of $\Gamma_e$ (which is equivalent to adding 2 new coordinates to the Boolean part). \\
		\end{enumerate}
	\end{proof}

	As mentioned previously \cite{Pbalfveclef}, the Boolean decompositions \[ \Gamma = \{ F \cup G : F \in S, G \in 2^{[d]} \} \] resemble Lefschetz decompositions in the geometric setting of compact K\"ahler manifolds of (complex) dimension $d$. We partially reproduce some of the results and terminology below. \\
	
	\begin{thm} \textbf{(Hard Lefschetz and Lefschetz decomposition, Theorem 3.1 on p. 14 -- 15 of \cite{Pbalfveclef} from Theorem 14.1.1 on p. 237 -- 238 of \cite{Ara}, Theorem on p. 88 of \cite{CMSP}, p. 122 of \cite{GH}, Theorem 6.3 on p. 138 of \cite{Voi}) \\} \label{HLshort}
		\begin{enumerate}
			\item \textbf{(Hard Lefschetz) \\} 

				 Let $(X, \omega)$ be a compact K\"ahler manifold of complex dimension $D$. This induces a homomorphism \[ L : H^k(X) \longrightarrow H^{k + 2}(X) \] given by $[\alpha] \mapsto [\omega \wedge \alpha]$. The map \[ L^r : H^{ D - r }(X) \longrightarrow H^{ D + r }(X) \] is an isomorphism for $0 \le r \le D$. \\

			\item \textbf{(Lefschetz decomposition) \\}
			
			We define by \[ P^i(X) \coloneq \ker( L^{D - i + 1} : H^i(X) \longrightarrow H^{2D - i + 2}(X) ) \] the \textbf{primitive cohomology} of $X$. Equivalently, we have \[ P^{D - r}(X) \coloneq \ker( L^{r + 1} : H^{D - r}(X) \longrightarrow H^{D + r + 2}(X) ). \]
				
			% Should we make the upper bound from m - 2k \le D?
			Then, we have that \[ H^m(X) = \bigoplus_{ k = 0 }^{ \lfloor \frac{m}{2} \rfloor } L^k P^{m - 2k}(X). \] This is called the \textbf{Lefschetz decomposition}. \\

		\end{enumerate}
	\end{thm}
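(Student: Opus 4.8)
The statement is the classical Hard Lefschetz theorem for compact K\"ahler manifolds together with the primitive (Lefschetz) decomposition, so the plan is to reproduce the standard Hodge-theoretic proof via an $\mathfrak{sl}_2(\mathbb{C})$-action on harmonic forms. First I would fix a K\"ahler metric with K\"ahler form $\omega$ and invoke Hodge theory on the compact manifold $X$: every class in $H^k(X)$ has a unique harmonic representative, so it suffices to work on the finite-dimensional graded vector space $\mathcal{H}^\bullet = \bigoplus_{k} \mathcal{H}^k$ of harmonic forms. Since $\omega$ is $d$-closed (indeed parallel for the Levi-Civita connection), the K\"ahler identities show that $L = \omega \wedge (-)$ commutes with the Laplacian, hence preserves $\mathcal{H}^\bullet$ and descends to the map $L : H^k(X) \to H^{k+2}(X)$ of the statement.

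Next I would bring in the adjoint operator $\Lambda = L^{*}$ (interior product with $\omega$) and the degree-counting operator $H$ acting on $\mathcal{H}^{D+m}$ as multiplication by the integer $m$. The core input is the commutator relation $[\Lambda, L] = H$ extracted from the K\"ahler identities, together with the immediate identities $[H, L] = 2L$ and $[H, \Lambda] = -2\Lambda$. These three relations make $\mathcal{H}^\bullet$ a finite-dimensional representation of $\mathfrak{sl}_2(\mathbb{C})$ in which $H$ is the semisimple element, $L$ the raising operator, $\Lambda$ the lowering operator, and the weight-$m$ eigenspace is precisely $\mathcal{H}^{D+m} \cong H^{D+m}(X)$.

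Then the theorem reduces to pure representation theory. Decomposing $\mathcal{H}^\bullet$ into irreducible $\mathfrak{sl}_2(\mathbb{C})$-summands, on each irreducible of highest weight $n$ the operator $L^{r}$ sends the weight-$(-r)$ line isomorphically onto the weight-$r$ line for $0 \le r \le n$; summing over the summands gives that $L^{r} : H^{D-r}(X) \to H^{D+r}(X)$ is an isomorphism, which is part (1). For part (2), the primitive space $P^{D-r}(X) = \ker(L^{r+1} : H^{D-r}(X) \to H^{D+r+2}(X))$ is exactly the span of the lowest-weight vectors of the irreducible summands meeting $\mathcal{H}^{D-r}$, and the asserted decomposition $H^m(X) = \bigoplus_{k} L^{k} P^{m-2k}(X)$ is the reading-off in degree $m$ of the decomposition of $\mathcal{H}^\bullet$ into irreducibles.

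The only substantive obstacle is the identity $[\Lambda, L] = H$; this is the single place where the K\"ahler hypothesis $d\omega = 0$ is genuinely used (the statement is false for general compact complex manifolds), and I would obtain it from the K\"ahler identities $[\Lambda, \partial] = -i\bar{\partial}^{*}$ and $[\Lambda, \bar{\partial}] = i\partial^{*}$, or directly by a local computation in holomorphic normal coordinates where the metric agrees with the flat one to second order so that the verification reduces to the model case $\mathbb{C}^D$. Once that relation is in place, everything downstream is the standard finite-dimensional $\mathfrak{sl}_2$-theory together with elliptic regularity on a compact manifold, both of which are formal here; since the intended use in this note is only as a template for the combinatorial "Boolean'' analogue, I would simply cite the references listed for the full details of these standard ingredients.
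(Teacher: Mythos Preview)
Your outline is the standard Hodge-theoretic proof via the $\mathfrak{sl}_2(\mathbb{C})$-action on harmonic forms, and it is correct. However, the paper does not give any proof of this theorem at all: it is quoted purely as background, with the proof deferred entirely to the cited references (Arapura, Carlson--M\"uller-Stach--Peters, Griffiths--Harris, Voisin). Those references contain exactly the argument you sketch, so your proposal is consistent with what the paper points to, but there is nothing in the paper itself to compare against; the theorem functions here only as a template motivating the combinatorial ``Boolean'' analogue.
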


	In our particular setting, we will focus on a formal version focusing on the maps of vector spaces involved. \\

	\begin{defn}
		Consider a sequence $L^\cdot$ of maps $L : A^k \longrightarrow A^{k + 2}$ on a sequence of graded vector spaces $A^k$ with $A_d \ne 0$ and $A^k = 0$ for all $k \ge d + 1$. The map $L$ has the \textbf{Lefschetz property} if $L^r$ is an isomorphism of vector spaces for all $0 \le r \le d$ and has a nontrivial kernel for $r \ge d + 1$. \\
	\end{defn}
	
	The maps which we will study this property on are the defined below. \\

	\begin{defn} \label{suspdiv}
		Let $\mathcal{C}$ be a class of flag Cohen--Macaulay simplicial complexes closed under (double) suspensions where those of a given dimension are PL homeomorphic to each other. Given $\Delta \in \mathcal{C}$ of dimension $d - 1$, let $\Gamma$ be a simplicial complex such that $f(\Gamma) = h(\Delta)$. \\

		Fix an even number $d$. We now construct a map $L$ between elements $\Delta \in \mathcal{C}$ graded by $\dim \Delta + 1$. On elements of $\mathcal{C}$ of degree $\le d - 2$, the map $L$ is defined by composing a double suspension with a ``net'' single positive edge subdivision which is either an actual edge subdivision or a collection of edge subdivisions and their inverses ``adding'' to a simple one after splitting the edge subdivisions and contractions into blocks ``adding'' to 1 edge subdivision and 0 on higher degrees. \\
	\end{defn}
	
	We can use this to relate local to global properties of Boolean decompositions of simplicial complexes whose $f$-vectors are equal to $h$-vectors of flag Cohen--Macaulay simplicial complexes. When the simplicial complex $\Delta \in \mathcal{C}$ we take the $h$-vector of is a pseudomanifold without boundary (p. 24 of \cite{St}), the local parts we compare to are Boolean decompositions of simplicial complexes whose $f$-vectors are $h$-vectors of codimension 2 flag spheres inside $\Delta$. \\
	
	% Local vs. global properties of Boolean decompositions...
	\begin{cor} \label{locglobbool}

		Fix an even number $d$. Given a $(d - 1)$-dimensional flag Cohen--Macaulay simplicial complex $\Delta$ and its link $\lk_\Delta(e)$ over an edge $e \in \Delta$, take simplicial complexes $\Gamma$ and $\Gamma_e$ such that $f(\Gamma) = h(\Delta)$ and $f(\Gamma_e) = h(\lk_\Delta(e))$. Suppose that we are in the setting of Definition \ref{suspdiv} and $\Gamma_e$ has a Boolean decomposition for each edge $e \in \Delta$. Then, $\Gamma$ has a Boolean decomposition if and only if the map $L$ has the Lefschetz property.  \\
	\end{cor}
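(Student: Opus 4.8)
The plan is to bootstrap from Propositions~\ref{subconchange} and \ref{pltransbool}, which already describe how a Boolean decomposition behaves under a single stellar edge subdivision and under an admissible edge contraction, and to recognize the Lefschetz property of $L$ as the assertion that the \emph{cumulative} effect of the operations defining $L$ respects the Boolean structure. Throughout I would record a Boolean decomposition $\Gamma = \{F\cup G : F\in S,\ G\in 2^{[d-2|F|]}\}$ by the equivalent generating-function identity $f_\Gamma(t) = \sum_{j\ge 0} f_{j-1}(S)\, t^{j}(1+t)^{\,d-2j}$, so that a Boolean decomposition is exactly the datum of a subcomplex $S$ on the vertices outside $[d]$ whose $f$-vector is obtained by peeling off the factors $(1+t)^{d-2j}$. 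Then I would isolate the two elementary moves. A double suspension of $\Delta$ multiplies $h_\Delta$ by $(1+t)^2$, which on the $\Gamma$ side is a double coning; in the identity above it raises $d$ to $d+2$ while leaving $S$ untouched, so it is Boolean-compatible and contributes only to the Boolean part. A single stellar subdivision along $e$ sends $\Gamma\mapsto \Gamma*_{\Gamma_e}u=\Gamma\cup(\Gamma_e*u)$ by Proposition~\ref{subconchange}(1), and by Proposition~\ref{pltransbool}(1) this adds the new vertex $u$ to $S$ whenever $\Gamma_e$ has a Boolean decomposition, so it contributes only to the initial non-Boolean part; its inverse (an admissible contraction) is the move of Proposition~\ref{subconchange}(2)--\ref{pltransbool}(2), which removes a vertex from $S$ and is Boolean-compatible precisely when $\widetilde\Gamma=\Ast_\Gamma(u)$. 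Consequently, along any sequence of flag edge subdivisions and admissible inverses connecting two complexes of $\mathcal{C}$, a Boolean decomposition is transported provided every link encountered carries one; the hypothesis that $\Gamma_e$ has a Boolean decomposition for each edge $e$, together with an induction on dimension for the links appearing along the way (base case: codimension-$2$ flag spheres when $\Delta$ is a pseudomanifold), supplies exactly this.

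For the implication ``$\Gamma$ Boolean $\Rightarrow$ $L$ has the Lefschetz property'', note that by Definition~\ref{suspdiv} the iterate $L^r$ is $r$ double suspensions followed by a net of $r$ edge subdivisions, once the subdivisions and contractions are grouped into blocks each summing to one subdivision and to zero above the truncation degree $d$. By the two elementary moves, on each relevant graded piece $L^r$ adds $2r$ to the Boolean exponent and a net of $r$ vertices to $S$, and a Boolean decomposition of $\Gamma$ makes this a dimension-preserving bijection on the corresponding $f$-vector data whenever $\deg+2r\le d$ --- this is just the statement that $S$ is unchanged while the factors $(1+t)^2$ are absorbed --- so $L^r$ is an isomorphism there; once the truncation bound is exceeded, the surviving map has a nontrivial kernel, which is the second half of the property.

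For the converse ``$L$ has the Lefschetz property $\Rightarrow$ $\Gamma$ Boolean'', I would invoke the Lutz--Nevo theorem: every $\Delta\in\mathcal{C}$ is joined to a fixed model of its dimension --- in the sphere case the boundary of a cross-polytope, whose associated $\Gamma$ is the Boolean complex $2^{[d]}$, trivially carrying the Boolean decomposition with $S$ a point --- by a sequence of flag edge subdivisions and admissible inverses, which we group into blocks as in Definition~\ref{suspdiv}. The isomorphism part of the Lefschetz property says precisely that the net degreewise effect of the subdivision blocks matches an iterate of ``double suspension followed by one subdivision'', and in particular that no contraction block strictly shrinks the $f$-vector of $S$ (a strict loss would drop the rank, contradicting bijectivity); hence at every step the compatibility $\widetilde\Gamma=\Ast_\Gamma(u)$ required by Proposition~\ref{pltransbool}(2) holds, and transporting the Boolean decomposition of the model back along the sequence (valid because every link is Boolean by hypothesis and induction) yields a Boolean decomposition of $\Gamma$.

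The step I expect to be the main obstacle is the well-definedness of the ``net single edge subdivision'': Lutz--Nevo only produces \emph{some} flag sequence with admissible contractions, and a priori different sequences yield different intermediate $S$'s, so the real content of the Lefschetz hypothesis is that all such sequences have the same net degreewise effect. Making the block decomposition of Definition~\ref{suspdiv} interact cleanly with admissibility (no contraction across an induced $4$-cycle) and with flagness of \emph{every} intermediate complex, while simultaneously running the dimension induction on the links $\Gamma_e$, is the delicate point, and is where I would concentrate the care. A secondary bookkeeping issue is pinning down the precise indexing of the graded pieces $A^k$ and the truncation so that ``isomorphism for $0\le r\le d$, nontrivial kernel for $r\ge d+1$'' lines up with the exponents in the $(1+t)^{d-2j}$ factors; I would resolve this by arguing throughout with the generating-function identity for $f_\Gamma(t)$ rather than with the vector spaces directly.
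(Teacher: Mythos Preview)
Your proposal is correct and follows essentially the same route as the paper: both arguments bootstrap from Propositions~\ref{subconchange} and~\ref{pltransbool}, identifying the double suspension with the Boolean part and the net edge subdivision with the initial part $S$, and then read off the Lefschetz property as invertibility of these moves up to degree $d$ together with a nontrivial kernel beyond. The paper's own proof is considerably more telegraphic---it lists the formal identifications $P^{d-2k}=2^{[d-2k]}=\ker(L^{k+1})$ and asserts that subdivisions add vertices to $S$ while contractions remove them---whereas you spell out the generating-function bookkeeping, invoke Lutz--Nevo explicitly to reach a cross-polytope model for the converse, and (rightly) flag the well-definedness of the ``net single edge subdivision'' as the place where care is needed; the paper treats that issue only formally.
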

	
	\begin{rem} \label{algdisc}
		Since $\Delta$ is Cohen--Macaulay and flag, $\Gamma$ can be chosen to be $k$-chromatic, where $k$ is the largest index of a nonzero entry of the $h$-vector (Corollary 2.3 on p. 474 of \cite{CCV}, Theorem p. 23 and Section 6.4 on 33 of \cite{BFS}). If $h_d(\Delta) \ne 0$ (e.g. when $\Delta$ is a sphere), $\Gamma$ can be taken to be balanced. In this context, a map to compare $L$ with is the one on $B^\cdot(\Gamma) \coloneq k[\Gamma]/(x_1^2, \ldots, x_n^2)$ defined by adding in missing colors by multiplying by $(1 + \theta_{d - 1})(1 + \theta_d)$, where $\theta_1, \ldots, \theta_d$ is the usual linear system of parameters of a balanced simplicial complex partitioning the vertices of $\Gamma$ into those of a given color (Proposition 4.3 on p. 97 of \cite{St}). We can look at how this interacts with the $x^H$ giving a basis of the Artinian reduction of $k[\Gamma]$ with respect to $\theta_1, \ldots, \theta_d$. Note that the quotienting by the squares of the variables restricts the polynomials in $\theta_i$ to those that are squarefree. As mentioned previously (Section 3 of \cite{Pbalfveclef}), this hints at a sort of anticommutative structure. It would be interesting if we can relate this to Lefschetz-type structures involving exterior algebras from \cite{KR} and \cite{HW} which also involve Boolean components. \\
		
		There may be additional restrictions related to freeness when $\Gamma$ itself is taken to be Cohen--Macaulay (p. 35 of \cite{St}, Theorem 1.5.17 on p. 37 -- 38 of \cite{BH}). When $\Gamma$ is flag, we can take $\Delta$ to be vertex decomposable \cite{CN}. \\

	\end{rem}
	
	\begin{proof}
		The idea is to compare the map $L$ in Definition \ref{suspdiv} with the $f$-vector changes induced by edge subdivisions and contractions between PL homeomorphic flag simplicial complexes of the same dimension in Proposition \ref{subconchange}. Relating this back to a formal version of the Lefschetz decomposition, we formally have maps $L : A^k \longrightarrow A^{k + 2}$ from (collections of) $(k - 1)$-dimensional flag Cohen--Macaulay simplicial complexes in $\mathcal{C}$ to $(k + 1)$-dimensional flag Cohen--Macaulay simplicial complexes in $\mathcal{C}$. Essentially, there are vector space isomorphisms (generated by relevant faces) up to the point we reach degree $d$. \\
		
		For $(d - 1)$-dimensional simplicial complexes in $\mathcal{C}$, we formally have the following:
		
		\begin{itemize} 
			\item $A^d \supset P^d = 2^{[d]} = \ker(L : A^d \longrightarrow A^{d + 2})$ \\
			
			\item $A^{d - 2} \supset P^{d - 2} = 2^{[d - 2]} = \ker(L^2 : A^{d - 2} \longrightarrow A^{d + 2})$ \\
			
			\item $A^2 \supset P^2 = 2^{[2]} = \ker(L^{ \frac{d}{2} } : A^2 \longrightarrow A^{d + 2})$ \\
			
			\item $A^0 \supset P^0 = 2^{[0]} = \ker(L^{ \frac{d}{2} + 1 } : A^0 \longrightarrow A^{d + 2})$ \\
			
			\item In general, we are (formally) considering maps $A^{d - 2k} \supset P^{d - 2k} = 2^{[d - 2k]} = \ker( L^{ k + 1 } : A^{d - 2k} \longrightarrow A^{d + 2})$. \\
		\end{itemize} 
		
		The suspensions and subdivisions are both invertible with inverses given by point removals and contractions. The point is to have an invertible map until we eventually get 0 at the point we go to degree $d + 2$. From Proposition \ref{subconchange}, we see that vertices are added to the initial non-Boolean part under edge subdivisions and removed under edge contractions. Combining these observations leads to the desired statement. 
	\end{proof}

	\color{black}

\end{document}